\documentclass{article}

\usepackage{amsmath}
\usepackage{amsthm}
\usepackage{amssymb}
\usepackage{mathtools}
\usepackage{mathrsfs}
\usepackage{setspace} 
\usepackage{lscape} 
\usepackage{fancyhdr}
\usepackage[vcentering,dvips]{geometry}	
\geometry{
    letterpaper,
    left = 1.0in,
    right = 1.0in,
    top = 1.0in,
    bottom = 1.375in
}
\usepackage{color}
\usepackage{xcolor}
\usepackage{enumitem}
\usepackage{url} 
\usepackage[inactive]{srcltx} 
\usepackage{relsize} 
\usepackage{graphicx}
\usepackage[pdftex,plainpages=false,pdfpagelabels=true,breaklinks=true,pagebackref]{hyperref} 
\hypersetup{%
	bookmarksnumbered = true,
	pdftitle={\@title},
	pdfauthor={\@author},
	pdfsubject={\@dept},
	pdfkeywords={\@keywords},
	pdfpagelayout=SinglePage,
    bookmarksopen=False,
	pdfborder=0 0 0, 		
    pdffitwindow=true,      
    pdfcreator={\@author},  
    pdfnewwindow=true,      
    colorlinks=true,        
    linkcolor=blue,         
    citecolor=magenta,      
    filecolor=magenta,      
    urlcolor=cyan           
}
\hypersetup{allcolors=blue}

\hyphenpenalty=5000	
\tolerance=1000     
\clubpenalty=10000    
\widowpenalty=10000   
\brokenpenalty=10000  

\raggedbottom
\pagestyle{plain}

\theoremstyle{plain}
\newtheorem*{theorem*}{Theorem}
\newtheorem{theorem}{Theorem}
\newtheorem{definition}{Definition}

\newtheorem*{corollary*}{Corollary}
\newtheorem{lemma}{Lemma}[theorem]
\newtheorem{notation}{Notation}
\newtheorem*{notation*}{Notation}
\newtheorem{observation}{Observation}
\newtheorem{remark}{Remark}

\DeclareMathOperator{\Exp}{Exp}
\DeclareMathOperator{\AK}{AK}



\title{Absolute Constants of Koras-Russell-like threefolds}
\author{Guillermo Valdeon}
\date{March 2025}

\begin{document}
\maketitle
\begin{center}
    \parbox{\linewidth}{
    \begin{abstract}
    This result generalizes a previous result established in \cite{thesis} where the Absolute Constants of the Koras-Russell threefold was shown to be invariant under translates in the base field to the Absolute Constants of the Koras-Russell threefold being invariant under translates of any of its absolute invariants. It must be pointed out that there is no prior reason for the ring of Absolute Constants to be invariant under so general translates, but remains a possibility that it is part of a more general property of the Absolute Constants ring.
    \end{abstract}
    }
\end{center}
\section{Introduction.}
The Absolute Constants invariant together with the Koras-Russell threefold have been key in solving several conjectures in algebraic geometry. Now this work shows that at least for the Koras-Russell threefold the Absolute Constants invariant remains the same for any translate with respect to any of its invariants, hinting at possibly another general property of the Absolute Constants invariant that can help in its calculation over other affine curves.

\section{Preliminary concepts and results.}
This work presents the results of identifying the ring of absolute constants for translates over the Koras-Russell cubic threefold over its ring of absolute constants. Throughout this work, consider \(F, k\) to be fields.
\begin{notation}
    \(R_{F,\eta}\) denotes the domain \(\frac{F[X,Y,Z,T]}{X^2Y+Z^2+T^3+\eta(X)}\) for \(\eta\in F[X]\). 
\end{notation}
\begin{remark}
    \(\AK(R_{F,\eta}/F)\subseteq F[x]\) for all \(\eta\in F[X]\). 
\end{remark}
\begin{proof}
    Let \(\eta\in F[X]\) be such that \(\eta(X)=\sum_{i=0}^n c_iX^i\) for \(c_i\in F\). Consider a ring homomorphism \(\phi_1\) as in \cite{Hyper}, then \(\phi_1(x^2y+z^2+t^3+\eta(x))
    =\phi_1\left(x^2y+z^2+t^3+\sum_{i=0}^n c_ix^i\right)
    =\phi_1\left(\sum_{i=0}^n c_ix^i\right)+\phi_1^2(x)\phi_1(y)+\phi_1^2(z)+\phi_1^3(t)
    =\sum_{i=0}^n c_i\phi_1^i(x)+x^2(y+2zU-x^2U^2)+(z-x^2U)^2+t^3
    =\sum_{i=0}^n c_ix^i+x^2y+t^3+2x^2zU-x^4U^2+z^2-2x^2zU+x^4U^2
    =\eta(x)+x^2y+z^2+t^3\cong0\); hence, \(\phi_1\) is a well-defined ring homomorphism, so it has an associated locally finite higher derivation \(D\). Just as in \cite{thesis}, \(D\) is iterative over \(R_{k,\eta}\); and hence, \(\phi_1\) is an exponential function of \(R_{F,\eta}/F\). Only \(x\) and \(t\) are fixed by \(\phi_1\) and \(R_{F\eta}\) is a domain, so the ring of \(\phi_1\)-invariants is \(F[x,t]\). Then \(\AK(R_{F,\eta}/F)\subseteq F[x,t]\).\\
    Again consider, \(\phi_2\colon R_{F}\rightarrow R_{F}[U]\) as in \cite{Hyper}, then \(\phi_2(\eta(x)+x^2y+z^2+t^3)=\phi_2(\eta(x))+\phi_2^2(x)\phi_2(y)+\phi_2^2(z)+\phi_2^3(t)+\phi_2\left(\sum_{i=0}^n c_ix^i\right)=\sum_{i=0}^n c_i\phi_2^i(x)+x^2(y+3t^2U-3x^2tU^2+x^4U^3)+z^2+(t-x^2U)^3=\sum_{i=0}^n c_ix^i+x^2y+z^2+3x^2t^2U-3x^4tU^2+x^6U^2+t^3-3x^2t^2U+3x^4tU^2-x^6U^3=\eta(x)+x^2y+z^2+t^3\cong0\); hence, \(\phi_2\) is a well-defined ring homomorphism, so it has an associated locally finite higher derivation \(D\). \(D\) is iterative over \(R_{k,\eta}\); and hence, \(\phi_2\) is an exponential function. The ring of \(\phi_2\)-invariants is \(F[x,z]\). Therefore, \(\AK(R_{F,\eta}/F)\subseteq F[x,z]\cap F[x,t] = F[x]\).
\end{proof}
\begin{observation}
Consider indeterminates \(X,Y,Z,T\), \(B:=k[X,Y,Z,T]\), \(c\in k\) and \(\eta_c\in k[X]\) such that the indepentent coefficient of \(\eta_c\) is \(c\), let \(I_{\eta_c}\), \(J_c\) be the ideals of \(B\) generated by \(f_{\eta_c}:=X^2Y+Z^2+T^3+\eta_c(X)\), and \(f_c=X^2Y+Z^2+T^3+c\in B\) respectively. Let \(R_{\eta_c}:=B/I_{\eta_c}\), and \(S_c:=B/J_c\). \(I_{\eta_c}\), \(J_c\) are prime ideals of \(B\) for all \(c\in k\) and \(\eta_c\in k[X]\) because they are linear in \(Y\). Let \(x_{\eta_c},y_{\eta_c},z_{\eta_c},t_{\eta_c}\in R_{\eta_c}\) denote the images of \(X,Y,Z,T\in B\) under the canonical map \(B\rightarrow R_{\eta_c}\). Let \(\Xi(x_c),\Xi(y_c),\Xi(z_c),\Xi(t_c)\in S_c\) denote the images of \(X,Y,Z,T\in B\) under the canonical map \(B\rightarrow S_c\). Note that according to the given notation \(R_X\) is the Koras-Russell threefold discussed in \cite{Hyper}.
\end{observation}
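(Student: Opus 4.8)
The plan is to establish the one substantive assertion buried in this Observation — that $I_{\eta_c}$ and $J_c$ are prime ideals of $B = k[X,Y,Z,T]$ — since everything else in the statement is merely the fixing of notation. Because $B$ is a unique factorization domain, a nonzero principal ideal is prime exactly when its generator is irreducible, so it suffices to show that $f_{\eta_c} = X^2Y + Z^2 + T^3 + \eta_c(X)$ and $f_c = X^2Y + Z^2 + T^3 + c$ are irreducible in $B$.

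First I would regard $B$ as $A[Y]$ with $A := k[X,Z,T]$, itself a UFD, and invoke Gauss's lemma: an element of $A[Y]$ is irreducible precisely when it is primitive and irreducible in $\Frac(A)[Y] = k(X,Z,T)[Y]$. Both $f_{\eta_c}$ and $f_c$ have $Y$-degree $1$, hence are automatically irreducible over the field $k(X,Z,T)$. For primitivity one computes $\gcd$ in $A$ of the two coefficients $X^2$ and $g$, where $g = Z^2 + T^3 + \eta_c(X)$ (respectively $g = Z^2 + T^3 + c$). The only irreducible divisor of $X^2$ is $X$, and $X \nmid g$: reduction modulo $X$ sends $g$ to $Z^2 + T^3 + c \in k[Z,T]$, which is nonzero since it contains the monomial $Z^2$. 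Hence $\gcd(X^2,g) = 1$, both polynomials are primitive, and therefore irreducible in $B$; the argument for $f_c$ is verbatim the same.

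It is worth recording that the stated justification ``linear in $Y$'' is, on its own, insufficient — $XY \in B$ is linear in $Y$ but generates a non-prime ideal — so the point genuinely in use is that the coefficient of $Y$ and the $Y$-free part are coprime, i.e. that $Z^2 + T^3 + \eta_c(X)$ is not divisible by $X$; this is exactly where the hypothesis that the constant term of $\eta_c$ equals $c$ is invoked, making the reduction mod $X$ agree with that of $f_c$. An equivalent, geometrically suggestive route would be to localize: inverting $X$ lets one solve $Y = -X^{-2}(Z^2+T^3+\eta_c(X))$, so $B[X^{-1}]/I_{\eta_c} \cong k[X,X^{-1},Z,T]$, a domain into which $B/I_{\eta_c}$ embeds once $X$ is seen to be a nonzerodivisor modulo $f_{\eta_c}$ — again a consequence of the same coprimality, via comparison of $Y$-leading coefficients. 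I expect no real obstacle in either approach; the entire matter reduces to a one-line coprimality check dressed up in Gauss's lemma.
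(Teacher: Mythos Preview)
Your argument is correct. The paper itself offers no proof beyond the parenthetical phrase ``because they are linear in $Y$'' embedded in the Observation; your write-up supplies exactly the missing detail --- primitivity of $f_{\eta_c}$ and $f_c$ as elements of $k[X,Z,T][Y]$ --- via Gauss's lemma, and your critique that linearity in $Y$ alone is insufficient (the $XY$ example) is apt. One small over-claim: the hypothesis that the constant term of $\eta_c$ equals $c$ is \emph{not} actually needed for primality, since $Z^{2}+T^{3}+\eta_c(0)$ is nonzero for any value of $\eta_c(0)$; that hypothesis matters later in the paper, where the associated graded ring of $R_{\eta_c}$ under $\mathcal{F}_1$ is identified with $S_c$, but not here.
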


\begin{definition}
Let \(\omega_1\) be the grading \(\{B_m\}_{m\in\mathbb{Z}}\) on \(B\) such that for \(m\in\mathbb{Z}\),
\(B_m=\{X^{i_1}Y^{i_2}Z^{i_3}T^{i_4}\,|\,-i_1+2i_2=m\}\).
Also, let \(R_{m,\eta_c}\) be the image of \(B_m\) under the canonical map \(B\rightarrow R_{\eta_c}\) and let \(\mathcal{F}_1:=\left\{\cup_{i\leq m}R_{i, \eta_c}\right\}_{m\in\mathbb{Z}}\).
\end{definition}
\begin{definition}
Let \(\omega_2\) be the grading \(\{B_m\}_{m\in\mathbb{Z}}\) on \(B\) such that for \(m\in\mathbb{Z}\),
\(B_m=\{X^{i_1}Y^{i_2}Z^{i_3}T^{i_4}\,|\,6i_1-6i_2+3i_3+2i_4=m\}\).
Also, let \(S_{m,c}\) be the image of \(B_m\) under the canonical map \(B\rightarrow S_c\) and let \(\mathcal{F}_2:=\{\cup_{i\leq m}S_{i,c}\}_{m\in\mathbb{Z}}\).
\end{definition}

\begin{lemma}\label{filtrations}
With the above notation, the following holds for all \(c\in k\).
\begin{enumerate}
    \item \(\mathcal{F}_1\) is a degree filtration on \(R_{\eta_c}\) having its associated graded ring naturally \(k\)-isomorphic to \(S_c\).
    \item \(\mathcal{F}_2\) is a degree filtration on \(S_c\) having its associated graded ring naturally \(k\)-isomorphic to \(S_0\).
    \item \(\omega_2\) induces a \(\mathbb{Z}\)-grading on \(S_0\) and an \(\mathbb{N}\)-grading on \(k[\Xi(z_0),\Xi(t_0)]\).
\end{enumerate}
\end{lemma}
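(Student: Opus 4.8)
\emph{Proof plan.} I would deduce all three items from one observation about associated graded rings of principal hypersurface quotients, applied once with $\omega_1$ and once with $\omega_2$. The observation: if $B=\bigoplus_{m\in\mathbb{Z}}B_m$ is a $\mathbb{Z}$-graded commutative domain and $0\neq f\in B$ has $f^{*}$ for its homogeneous component of largest degree, then, with $F_m:=$ (image of $\bigoplus_{i\le m}B_i$ in $B/(f)$), the family $\{F_m\}_{m\in\mathbb{Z}}$ is a multiplicative, exhaustive $\mathbb{Z}$-filtration of $B/(f)$ (separated in the cases of interest below), and the canonical surjection $B\to B/(f)$ induces a natural isomorphism of graded $k$-algebras $B/(f^{*})\cong\gr\bigl(B/(f)\bigr)$. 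Multiplicativity holds because $B_iB_j\subseteq B_{i+j}$; the identification of $\gr$ uses $\gr B\cong B$ together with the fact that the ideal of all leading forms of elements of $(f)$ equals $(f^{*})$. The latter is the one point needing an argument, and it is short: $(f)$ is principal and $B$ is a domain, so there is no cancellation of leading forms, $(gf)^{*}=g^{*}f^{*}$ for every $g\in B$, and hence every leading form that occurs is a multiple of $f^{*}$.

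For items 1 and 2 I would just compute leading forms. The $\omega_1$-weights read off $-i_1+2i_2$ are $\deg X=-1,\ \deg Y=2,\ \deg Z=\deg T=0$, so $X^2Y,Z^2,T^3$ all sit in $\omega_1$-degree $0$ while the non-constant part $\sum_{i\ge 1}c_iX^i$ of $\eta_c$ sits in strictly negative degrees; thus the $\omega_1$-leading form of $f_{\eta_c}$ is $f_c=X^2Y+Z^2+T^3+c$, and the observation applied to $(B,\omega_1,f_{\eta_c})$ gives that $\mathcal{F}_1$ is a degree filtration on $R_{\eta_c}$ with associated graded ring naturally $k$-isomorphic to $B/(f_c)=S_c$ — a domain by the Observation, which is what certifies \emph{degree} filtration. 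The $\omega_2$-weights are $\deg X=6,\ \deg Y=-6,\ \deg Z=3,\ \deg T=2$, so $X^2Y,Z^2,T^3$ all sit in $\omega_2$-degree $6$ while the constant $c$ sits in degree $0$; the $\omega_2$-leading form of $f_c$ is therefore $f_0=X^2Y+Z^2+T^3$, and the observation applied to $(B,\omega_2,f_c)$ gives that $\mathcal{F}_2$ is a degree filtration on $S_c$ with associated graded ring naturally $k$-isomorphic to $B/(f_0)=S_0$, again a domain. For separatedness in both cases I would use the explicit $k$-basis of $B/(f)$ formed by the classes of the monomials $X^{i_1}Y^{i_2}Z^{i_3}T^{i_4}$ with $i_1\le 1$ or $i_2=0$ (legitimate since the relation is degree one in $Y$): reducing an arbitrary polynomial to this basis via $X^2Y\mapsto-(Z^2+T^3+\mathrm{const})$ never raises the relevant $\omega_j$-degree, so $F_m$ is spanned by the basis monomials of degree $\le m$ and $\bigcap_m F_m=0$.

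For item 3: since $f_0$ is $\omega_2$-homogeneous of degree $6$, the ideal $J_0=(f_0)$ is homogeneous, so $S_0=B/J_0$ inherits the $\omega_2$-grading, and it is genuinely $\mathbb{Z}$-indexed — the class $\Xi(y_0)$ is already nonzero in degree $-6$. The subring $k[\Xi(z_0),\Xi(t_0)]$ is the image of the graded subring $k[Z,T]\subseteq B$, and $k[Z,T]\cap J_0=0$: any nonzero element of $J_0$ has the form $hf_0$ with $\deg_Y(hf_0)=\deg_Y h+1\ge 1$, so it cannot be $Y$-free. Hence $k[\Xi(z_0),\Xi(t_0)]$ is a polynomial ring in two variables, graded via $\omega_2$ with $\Xi(z_0)$ in degree $3$ and $\Xi(t_0)$ in degree $2$; both weights being positive, every nonzero monomial $\Xi(z_0)^a\Xi(t_0)^b$ has degree $3a+2b\ge 0$ with equality only when $a=b=0$, so the induced grading is an $\mathbb{N}$-grading with degree-zero piece $k$.

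The main obstacle is really only the observation underlying everything, and within it the single delicate point: neither $\omega_1$ nor $\omega_2$ is a positive grading (each has a variable of negative weight), so one must make sure that the leading form of the lone generator already generates the full ideal of leading forms and that the induced filtration is separated. Principality of the defining ideal together with $B$ being a domain is exactly what makes both of these go through with no extra hypotheses; after that, items 1--3 amount to bookkeeping with the weight vectors $(-1,2,0,0)$ and $(6,-6,3,2)$.
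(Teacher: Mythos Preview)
Your proposal is correct and follows essentially the same route as the paper: compute the $\omega_1$- and $\omega_2$-leading forms of the defining polynomials, observe that these leading forms are prime (so the quotients are domains and the filtrations are genuine degree filtrations), and read off the associated graded rings as $S_c$ and $S_0$ respectively; for item~3, use homogeneity of $f_0$ under $\omega_2$. The paper's own proof is terser---it simply asserts that primeness of the leading form makes $\mathcal{F}_i$ a degree filtration---whereas you additionally justify why the ideal of leading forms of a principal ideal in a domain is again principal generated by $f^{*}$, and you supply an explicit monomial basis to check separatedness; these are welcome details the paper omits, but the underlying argument is the same.
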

\begin{proof}
\begin{enumerate}[wide, labelwidth=!, labelindent=0pt]
    \item Under \(\omega_1\), the leading form of \(f_{\eta_c}\) is \(f_c\) a grade \(0\) homogeneous polynomial over the grading \(\omega_1\). Since \(f_c\) is a prime element of \(B\) for all \(c\in k\), \(\mathcal{F}_1\) is a degree filtration on \(R_{\eta_c}\). Identifying the \(\mathcal{F}_1\)-leading forms of \(x_{\eta_c},y_{\eta_c},z_{\eta_c},t_{\eta_c}\in R_{\eta_c}\) with \(\Xi(x_c),\Xi(y_c),\Xi(z_c),\Xi(t_c)\in S_c\) respectively, yields a natural \(k\)-isomorphism of the associated graded ring of \(\mathcal{F}_1\) onto the integral domain \(S_c\).
    \item Under \(\omega_2\), the leading form of \(f_c\) is \(f_0\) a grade \(6\) homogeneous polynomial over the grading \(\omega_2\). Since \(f_0\) is a prime element of \(B\) for all \(c\in k\), \(\mathcal{F}_2\) is a degree filtration on \(S_c\). Identifying the \(\mathcal{F}_2\)-leading forms of \(\Xi(x_c),\Xi(y_c),\Xi(z_c),\Xi(t_c)\in S_c\) with \(\Xi(x_0),\Xi(y_0),\Xi(z_0),\Xi(t_0)\in S_0\) respectively, yields a natural \(k\)-isomorphism of the associated graded ring of \(\mathcal{F}_2\) onto the integral domain \(S_0\).
    \item Finally, since \(f_0\) is homogeneous under \(\omega_1\), and \(\omega_2\); it induces a \(\mathbb{Z}\)-grading on \(S_0\) and clearly, induces an \(\mathbb{N}\)-grading on the subring \(k[\Xi(z_0),\Xi(t_0)]\).
\end{enumerate}
\end{proof}
\begin{lemma}\label{trick}
Let \(A:=B/N\) where \(N\) is a prime ideal of \(B\) generated by \(Q:=X^2Y+g\) where \(g\in k[X,Z,T]\). Let \(x,y,z,t\in A\) denote the images of \(X,Y,Z,T\) under the canonical map \(B\rightarrow A\). If \(0\neq p\in A\), then there exist \(\varepsilon(p)\in\mathbb{N}\), \(u_j\), \(v_j\in k[z,t]\) for \(1\leq j\leq \varepsilon(p)\) and \(h\in k[x,z,t]\) such that
\(p=h+\sum_{j=1}^{\varepsilon(p)}(u_jx+v_j)y^j\) where \((u_{\varepsilon(p)},v_{\varepsilon(p)})\neq(0,0)\).
\end{lemma}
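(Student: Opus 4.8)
The plan is to obtain the normal form by a term-rewriting argument powered by the single relation $x^{2}y=-g(x,z,t)$ that holds in $A$. Write $B=k[X,Z,T][Y]$, fix a polynomial representative $P=\sum_{b=0}^{m}P_{b}(X,Z,T)\,Y^{b}$ of $p$, and expand it into monomials; since the asserted identity is $k$-linear, it suffices to rewrite each monomial $x^{a}y^{b}z^{c}t^{d}\in A$ into the stated shape and then collect terms according to the power of $y$.

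The engine is the following \emph{monomial reduction}, proved by induction on the $y$-exponent $b$: every monomial $x^{a}y^{b}z^{c}t^{d}$ lies in the $k$-span of the ``reduced'' monomials $x^{a'}z^{c'}t^{d'}$ and $x^{a'}y^{b'}z^{c'}t^{d'}$ with $a'\in\{0,1\}$, $b'\ge 1$. If $b=0$, or if $b\ge 1$ and $a\le 1$, the monomial is already reduced. If $b\ge 1$ and $a\ge 2$, substitute $x^{2}y=-g$ and expand $g=\sum_{e}g_{e}(z,t)x^{e}$ with $g_{e}\in k[z,t]$ to get
\[
x^{a}y^{b}z^{c}t^{d}=-\,x^{a-2}g(x,z,t)\,y^{b-1}z^{c}t^{d}=-\sum_{e}g_{e}(z,t)\,z^{c}t^{d}\,x^{a-2+e}\,y^{b-1},
\]
which is a $k$-combination of monomials all of $y$-exponent $b-1$, so the induction hypothesis applies. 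The recursion terminates because $b$ strictly decreases and is bounded below by $0$.

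Applying this reduction to each monomial of $P$ and regrouping by the power of $y$ gives $p=h+\sum_{j=1}^{J}(u_{j}x+v_{j})y^{j}$, where $h\in k[x,z,t]$ collects the contributions with no factor of $y$, and, for each $j\ge 1$, $v_{j}$ and $u_{j}$ are the finite sums of the coefficients from $k[z,t]$ multiplying $x^{0}y^{j}$ and $x^{1}y^{j}$ respectively, so $u_{j},v_{j}\in k[z,t]$. If $p\in k[x,z,t]$ one is in the degenerate case with empty sum; otherwise some $(u_{j},v_{j})\ne(0,0)$, and taking $\varepsilon(p)$ to be the largest such index and discarding the (vanishing) higher terms gives the claimed form with $(u_{\varepsilon(p)},v_{\varepsilon(p)})\ne(0,0)$.

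The one point that needs genuine care — and which is what makes the notation $\varepsilon(p)$ legitimate — is uniqueness of this representation. I would prove it by lifting a putative relation $H(X,Z,T)+\sum_{j=1}^{J}(U_{j}X+V_{j})Y^{j}=(X^{2}Y+g)\,W$ in $B$ and looking at the top power of $Y$: if $W\ne 0$ with $\deg_{Y}W=d$ and leading $Y$-coefficient $W_{d}$, then the coefficient of $Y^{d+1}$ on the right is $X^{2}W_{d}$, a nonzero polynomial of $X$-degree $\ge 2$, whereas the coefficient of $Y^{d+1}$ on the left is either $0$ or some $U_{j}X+V_{j}$, of $X$-degree $\le 1$ — a contradiction. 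Hence $W=0$, the left side vanishes, $H=0$ and all $U_{j}=V_{j}=0$, so the representation (hence $\varepsilon(p)$) is unique. Primality of $N$ is only used insofar as it makes $A$ a domain, which is convenient for the applications but plays no role in this bookkeeping; I expect this $X$-degree versus $Y$-degree comparison to be essentially the only obstacle, the rest being routine.
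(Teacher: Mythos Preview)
Your proof is correct and follows essentially the same approach as the paper: both reduce by induction on the $y$-exponent using the single relation $x^{2}y=-g$, the paper treating a whole term $\alpha y^{i}$ at once (divide $\alpha\in k[x,z,t]$ by $x^{2}$, write $\alpha=\beta x^{2}+ux+v$, and push $\beta x^{2}y^{i}=-g\beta\,y^{i-1}$ down one step) where you work monomial by monomial. Your added uniqueness argument is not in the paper's proof of the lemma, but it is a useful supplement since the main theorem later uses $\varepsilon(p)$ as though it were well-defined.
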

\begin{proof}
Clearly \(A=k[x,y,z,t]\). Consider a product \(\alpha y^i\in A\) where \(1\leq i\in\mathbb{N}\) and \(\alpha\in k[x,z,t]\). Then \(\alpha=\beta x^2+ux+v\) for some \(\beta\in k[x,z,t]\), \(u,v\in k[z,t]\). Since \(x^2y=-g\in k[x,z,t]\), so \(\alpha y^i=(ux+v)y^i+\gamma y^{i-1}\) where \(\gamma:=-g\beta\in k[x,z,t]\). Now the assertion readily follows by linearity.
\end{proof}

\section{Main Result.}
\begin{theorem}
Fix \(c\in k\), and \(\eta_c\in k[X]\) let \(R:=R_{\eta_c}\) and let \(\phi\in\Exp(R/k)\). If \(R^{\phi}\neq R\), then \(k[x]\subseteq R^{\phi}\subseteq k[x,z,t]\).
\end{theorem}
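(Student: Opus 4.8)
The plan is to exploit the two degree filtrations from Lemma~\ref{filtrations} to transport the problem from $R=R_{\eta_c}$, whose defining relation has messy lower-order terms, down to the highly graded ring $S_0=k[X,Y,Z,T]/(X^2Y+Z^2+T^3)$, where the grading $\omega_2$ is positive on $z$ and $t$ and everything is homogeneous. The general principle is standard (as used in \cite{Hyper} and \cite{thesis}): a nontrivial exponential map $\phi$ on a filtered ring $R$ induces, after passing to leading forms, a nontrivial exponential map on the associated graded ring; iterating this through $\mathcal{F}_1$ and then $\mathcal{F}_2$ yields a nontrivial locally nilpotent / exponential structure on $S_0$. Since $\operatorname{AK}(S_0/k)=k[\text{(leading forms of }x)]=k[\xi(x_0)]$ is already known (this is the Makar-Limanov computation underlying the Koras--Russell cubic), the invariant ring of the induced map on $S_0$ is contained in $k[\xi(x_0),\xi(z_0),\xi(t_0)]$, and one pulls this containment back up the two filtrations to conclude $R^\phi\subseteq k[x,z,t]$.

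**Concretely, the steps I would carry out are:** (1) Observe $R^\phi$ is a factorially closed subring of $R$; since $\phi$ is nontrivial, $R^\phi$ has transcendence degree $2$ over $k$. (2) Show $x\in R^\phi$: because $x$ and $t$ are the only variables fixed by $\phi_1$, and more to the point, use that the leading-form map sends $R^\phi$ into an invariant ring in the graded setting where $\operatorname{AK}$ is known; the variable $x$ has $\omega_1$-degree $-1$ and $\omega_2$-degree $6$, and one checks $\xi(x_0)$ lies in every relevant invariant ring, so $x$ survives as an invariant — more carefully, one argues that any exponential map on $R$ with $R^\phi\neq R$ must fix $x$ because the induced graded exponential map on $S_0$ fixes $\xi(x_0)$ (as $\operatorname{AK}(S_0/k)=k[\xi(x_0)]$ and the plinth ideal / local slice structure forces it), and a degree argument lifts this. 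This gives $k[x]\subseteq R^\phi$. (3) For the upper bound, take $0\neq p\in R^\phi$; by Lemma~\ref{trick} (applied with $g=Z^2+T^3+\eta_c(X)$, so $N=I_{\eta_c}$) write $p=h+\sum_{j=1}^{\varepsilon(p)}(u_jx+v_j)y^j$ with $h\in k[x,z,t]$, $u_j,v_j\in k[z,t]$. If $p\notin k[x,z,t]$ then $\varepsilon(p)\geq 1$; one then passes to leading forms under $\mathcal{F}_1$ and then $\mathcal{F}_2$, tracking the $y$-degree, and shows the image is a nonzero element of $\operatorname{AK}(S_0/k)=k[\xi(x_0)]$ that still involves $\xi(y_0)$ — a contradiction, since $\xi(y_0)$ is algebraically independent from $\xi(x_0)$ in $S_0$. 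Hence $\varepsilon(p)=0$ and $p\in k[x,z,t]$.

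**The main obstacle** I expect is Step (2) and the bookkeeping in Step (3): one must verify that taking $\mathcal{F}_1$-leading forms (and then $\mathcal{F}_2$-leading forms) genuinely sends the invariant ring $R^\phi$ into the invariant ring of the \emph{induced} exponential map, and that this induced map is still nontrivial — the subtlety is that leading forms of a $k$-subalgebra need not form the whole associated graded subring, and one must rule out degeneration (the leading form of the induced slice could a priori vanish or drop rank). This is handled by the standard lemma that for a degree filtration the leading-form construction is multiplicative and sends a nontrivial exponential map to a nontrivial one (the ``$\operatorname{ML}$-invariant is upper semicontinuous along degenerations'' philosophy), but it requires care to check the hypotheses of that lemma apply to both $\mathcal{F}_1$ and $\mathcal{F}_2$ here, in particular that $\phi$ being nontrivial on $R$ forces nontriviality to be preserved — equivalently, that a suitable local slice has nonzero leading form. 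Once that machinery is in place, the containment $k[x]\subseteq R^\phi\subseteq k[x,z,t]$ follows by comparing with the known $\operatorname{AK}(S_0/k)=k[\xi(x_0)]$ and using that $z,t$ generate the complementary part that the grading $\omega_2$ makes $\mathbb{N}$-graded (Lemma~\ref{filtrations}(3)).
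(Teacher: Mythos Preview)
Your Step~(2) contains a genuine gap. Leading-form passage is one-directional: from $\phi\in\Exp(R/k)$ you obtain an induced $\bar\phi$ on $S_c$ (and then $\psi$ on $S_0$) together with an inclusion $\Xi(R^\phi)\subseteq S_c^{\bar\phi}$, but there is no mechanism to lift an invariant \emph{back up}. Concretely, it is entirely possible that $\bar\phi(\bar x)=\bar x$ while $\phi(x)=x+aU$ with $a$ of strictly higher $\omega_1$-degree than $x$; then $\bar x\in S_c^{\bar\phi}$ yet $x\notin R^\phi$. So ``a degree argument lifts this'' does not work. The paper avoids this by reversing your order: it proves $R^\phi\subseteq k[x,z,t]$ \emph{first}, and only then argues $x\in R^\phi$ by taking two algebraically independent elements $q,h\in R^\phi\subseteq k[x,z,t]$, reducing modulo $x$ to $q_2,h_2\in k[z,t]$, showing these remain algebraically independent (otherwise factorial closedness forces $x\in R^\phi$ directly), and concluding that $\bar z,\bar t$ are algebraic over $S^{\bar\phi}$, whence $S^{\bar\phi}=S$, a contradiction.

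Your Step~(3) also conflates two different objects: the double leading form of $p\in R^\phi$ lands in $S_0^{\psi}$ for the \emph{specific} induced map $\psi$, not in $\AK(S_0/k)$. What you actually need is the per-map upper bound $S_0^{\psi}\subseteq k[\xi(x_0),\xi(z_0),\xi(t_0)]$ for every nontrivial $\psi$; this is strictly stronger than knowing $\AK(S_0/k)=k[\xi(x_0)]$. If you are willing to cite that stronger statement (it is what the Makar--Limanov/Crachiola arguments for the Russell cubic actually prove), then your Step~(3) does give a clean contradiction, since an element of the form $w\,\xi(y_0)^m$ or $w\,\xi(x_0)\xi(y_0)^m$ with $m\ge 1$ and $0\ne w\in k[\xi(z_0),\xi(t_0)]$ cannot lie in $k[\xi(x_0),\xi(z_0),\xi(t_0)]$ by the normal form of Lemma~\ref{trick}. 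In that case your route is in fact shorter than the paper's: the paper stops at $S_c$, cites only partial information from \cite{thesis}, and therefore has to run a separate transcendence-degree/algebraic-dependence argument to manufacture, beyond $\bar y\in S^{\bar\phi}$, an additional invariant (either $\bar x$ or a nonconstant in $k[\bar z,\bar t]$) before it can invoke those cited results. You should make explicit which statement about $S_0$ you are importing, and fix Step~(2) by adopting the paper's order of the two containments.
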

\begin{proof}
Fix \(c\in k\) and \(\eta_c\in k[X]\). Let \(S:=S_c,x:=x_c,y:=y_c,z:=z_c,t:=t_c\)
By (i) of \ref{filtrations}. \(\mathcal{F}_1\) is a degree filtration on \(R\). Let \(\delta\) denote the corresponding degree function on \(R\). Suppose, if possible, \(R^{\phi}\setminus{k[x,z,t]}\neq\emptyset\). Let \(p\in R^{\phi}\setminus{k[x,z,t]}\). Applying \ref{trick}, there are \(\varepsilon(p)\in\mathbb{N}\), \(u_j,v_j,\in k[z,t]\) for \(1\leq j\leq \varepsilon(p)\) and \(h\in k[x,z,t]\) such that
\(p=h+\sum_{j=1}^{\varepsilon(p)}(u_jx+v_j)y^j\) where \((u_{\varepsilon(p)},v_{\varepsilon(p)})\neq(0,0)\).
Our choice of \(p\) forces \(\varepsilon(p)\geq1\). Let \(m:=\varepsilon(p)\), \(u:=u_m\) and \(v:=v_m\). Note that \(\delta((ux+v)y^m)=2m\) if \(v\neq0\) and \(\delta((ux+v)y^m)=2m-1\) otherwise. Since \(\delta((u_jx+v_j)y^j)\leq2j\leq2m-2\) for \(0\leq j<m\), then \(\delta((ux+v)y^m)=\delta(p)\geq1\). So, the \(\mathcal{F}_1\)-leading form \(\Xi(p)\in S\) of \(p\) is either \(\Xi(u)\bar{x}\bar{y}^m\) or \(\Xi(v)\bar{y}^m\) where \(\Xi(u),\Xi(v)\) denote the leading forms of \(u,v\) respectively. \(\Xi(u),\Xi(v)\in k[\bar{z},\bar{t}]\). It is claimed there is \(p\in R^{\phi}\) such that \(\Xi(p)\) is either \(\Xi(u)\bar{x}\bar{y}^m\) or \(\Xi(v)\bar{y}^m\) with \(m\in\mathbb{N}\) and \(\Xi(v)\in k[\bar{z},\bar{t}]\setminus{k}\). Supposing otherwise, i.e. supposing \(\Xi(p)=\Xi(v)\bar{y}^m\) with \(0\neq\Xi(v)\in k\) for all \(0\neq p\in R^{\phi}\), derives to a contradiction.

First suppose there is \(v\in R^{\phi}\cap k[x,z,t]\setminus{k}\). Say \(v=v_0+\sum_{i\geq1}v_ix^i\) where \(v_i\in k[z,t]\) for \(i\geq0\). If \(v_0\in k[z,t]\setminus{k}\), then clearly \(\Xi(v)=\Xi(v_0)\in k[\bar{z},\bar{t}]\setminus{k}\) contrary to our supposition. If \(v_0\in k\),  then \(0\neq v-v_0\in R^{\phi}\) and since \(R^{\phi}\) is factorially closed, \(x\in R^{\phi}\). But then, \(\Xi(x)=\Xi(1)\bar{x}\bar{y}^0\) contrary to our supposition above. Hence \(R^{\phi}\cap k[x,z,t]=k\). Then, for \(\varepsilon(p)\) and \(v\) from the expression of \(p\) above, \(\varepsilon(p)\geq1\) and \(0\neq v\in k\), for all \(0\neq p\in R^{\phi}\). In particular, for each \(p\in R^{\phi}\setminus{k}\), \(\delta(p)=2\varepsilon(p)\geq2\). Since the transcendence degree of \(R\) over \(R^{\phi}\) is \(1\), the elements \(z,t\in R\) must be algebraically dependent over \(R^{\phi}\). Let \(U,V\) be indeterminates and \(0\neq P\in R^{\phi}[U,V]\) be such that \(P(z,t)=0\). The polynomial \(P\) is all not in \(k[U,V]\) since \(z,t\) are algebraically independent over \(k\). Let \(P(U,V)=\sum p(i,j)U^iV^j\) with \(p(i,j)\in R^{\phi}\) for all \(i,j\). Then, there is at least one \(i,j\) such that \(p(i,j)\) is not in \(k\) and hence \(\delta(p(i,j))\geq2\). Partition \(\textit{suppt}(P):=\{(i,j)\,|\,p(i,j)\neq0\}\) into sets \(M,M^{'}\) such that \(\delta(p(i,j))=2n:=\max\{\delta(p(i,j))\in\textit{suppt}(P)\}\) for all \((i,j)\in M\) and \(\delta(p(i,j))<2n\) for all \((i,j)\in M^{'}\). A straightforward verification shows that
\(P^{'}(U,V)+\sum_{(i,j)\in M\setminus{M^{'}}}p(i,j)U^iV^j=P(U,V)
=\mu(U,V)y^n+\eta(U,V)xy^n+\sum_{r=1}^{n}\nu_r(U,V)y^{n-r}\)
where \(0\neq\mu(U,V)\in k[U,V]\), \(\eta(U,V)\in k[z,t][U,V]\),
\(P^{'}(U,V):=\sum_{(i,j)\in M^{'}}p(i,j)U^iV^j\), and for \(1\leq r\leq n\), \(\nu_r(U,V)\in k[x,z,t][U,V]\).

Now \(P(z,t)=0\) and since \(z,t\) are algebraically independent over \(k\), \(\mu(z,t)\neq0\). So,

\(0\neq\mu(z,t)y^n=-\eta(z,t)xy^n-\sum_{r=1}^{n}\nu_r(z,t)y^{n-r}-P^{'}(z,t)\).

This is absurd since \(\delta(RHS)<2n\) whereas \(\delta(\mu(z,t)y^n)=2n\). In conclusion, there is \(p\in R^{\phi}\) such that \(\Xi(p)\) is either \(\Xi(u)\bar{x}\bar{y}^m\) or \(\Xi(v)\bar{y}^m\) with \(m\in\mathbb{N}\) and \(\Xi(v)\in k[\bar{z},\bar{t}]\setminus{k}\).

Next, let \(\bar{\phi}\in\Exp(S/k)\) be the nontrivial grade-preserving exponential map induced by \(\phi\). The, for \(p\in R^{\phi}\setminus{k[x,z,t]}\), so \(\Xi(p)\in S^{\bar{\phi}}\) and either \(\Xi(p)=\Xi(u)\bar{x}\bar{y}^m\) or \(\Xi(v)\bar{y}^m\) for some positive integer \(m\). Since \(S^{\bar{\phi}}\) is factorially closed in \(S\), surely \(\bar{y}\in S^{\bar{\phi}}\). Furthermore, let \(p\in R^{\phi}\) such that either \(\Xi(p)=\Xi(u)\bar{x}\bar{y}^m\) or \(\Xi(p)=\Xi(v)\bar{y}^m\) with \(m\geq0\) and \(\Xi(v)\in k[\bar{z},\bar{t}]\setminus{k}\). It readily follows that either \(\bar{x}\in S^{\bar{\phi}}\) or \(\Xi(v)\in S^{\bar{\phi}}\setminus{k}\). Therefore either \(k[\bar{x},\bar{y}]\subseteq S^{\bar{\phi}}\) or \(k[\bar{z},\bar{t}]\subseteq S^{\bar{\phi}}\neq k\). Neither of these is a possibility in view of Lemma 4.0.4 and Theorem 4.1 of \cite{thesis}. Thus \(R^{\phi}\subseteq k[x,z,t]\).

Lastly, suppose \(x\) is not in \(R^{\phi}\). Let \(q,h\in R^{\phi}\subseteq k[x,z,t]\) be algebraically independent over \(k\). Then, \(q=q_1x+q_2\), \(h=h_1x+h_2\) for \(q_1,h_1\in k[x,z,t]\) and \(q_2,h_2\in k[z,t]\). If there is \(0\neq\rho(X,Y)\in k[X,Y]\) with \(\rho(q_2,h_2)=0\), then \(\rho(q,h)=x\sigma\) for some \(\sigma\in k[x,z,t]\) and that forces \(x,\sigma\in R^{\phi}\) contradicting our supposition. So, \(q_2,h_2\) are algebraically independent over \(k\). Furthermore, letting \(\bar{\phi}\in\Exp(S/k)\) denote the nontrivial grade-preserving exponential map induced by \(\phi\), then \(k[\Xi(q),\Xi(h)]\subseteq S^{\bar{\phi}}\). Now the elements \(\bar{z},\bar{t}\) of \(S\) both are algebraic over \(k[\Xi(q),\Xi(h)]\) and also algebraic over \(S^{\bar{\phi}}\). Since \(S^{\bar{\phi}}\) is algebraically closed in \(S\), so \(k[\bar{z},\bar{t}]\subseteq S^{\bar{\phi}}\). Now the equation \(\bar{x}^2\bar{y}=-\bar{z}^2-\bar{t}^3-c\) allows us to conclude that each of \(\bar{x},\bar{y}\) is also in \(S^{\bar{\phi}}\). Consequently, \(S=k[\bar{x},\bar{y},\bar{z},\bar{t}]\subseteq S^{\bar{\phi}}\), i.e., \(S^{\bar{\phi}}=S\). This is absurd since \(\bar{\phi}\) is nontrivial. Hence \(k[x]\subseteq R^{\phi}\).
\end{proof}
\begin{observation}
    The previous result states that \(\AK(R_{F,\eta})=F[x]\) for all \(\eta\in F[X]\cong\AK(R_0)\). So, for \(R_{F,0}\) it is obtained that its absolute constants is invariant under translates from its elements.
\end{observation}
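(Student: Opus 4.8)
The plan is to read the Observation off directly from the Remark and the Theorem, since it is a formal consequence of the two-sided bound they jointly provide. Recall that the ring of absolute constants is by definition the intersection \(\AK(R_{F,\eta}/F)=\bigcap_{\phi\in\Exp(R_{F,\eta}/F)}R_{F,\eta}^{\phi}\) taken over all exponential maps. The Remark already supplies the inclusion \(\AK(R_{F,\eta}/F)\subseteq F[x]\), via the two explicit nontrivial exponential maps \(\phi_1,\phi_2\) whose invariant rings are \(F[x,t]\) and \(F[x,z]\) and meet in \(F[x]\); in particular this guarantees that nontrivial exponential maps exist, so the intersection is genuinely smaller than \(R_{F,\eta}\). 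It therefore remains only to prove the reverse inclusion \(F[x]\subseteq\AK(R_{F,\eta}/F)\).

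For the reverse inclusion I would show that \(x\) lies in \(R_{F,\eta}^{\phi}\) for every exponential map \(\phi\). Fix \(\phi\in\Exp(R_{F,\eta}/F)\). If \(\phi\) is trivial, i.e.\ \(R_{F,\eta}^{\phi}=R_{F,\eta}\), then \(F[x]\subseteq R_{F,\eta}^{\phi}\) holds vacuously. If instead \(R_{F,\eta}^{\phi}\neq R_{F,\eta}\), then the Theorem (applied with the ground field \(k\) taken to be \(F\), both being arbitrary fields) yields \(F[x]\subseteq R_{F,\eta}^{\phi}\subseteq F[x,z,t]\), so again \(F[x]\subseteq R_{F,\eta}^{\phi}\). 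Intersecting over all \(\phi\) gives \(F[x]\subseteq\AK(R_{F,\eta}/F)\), and combining with the Remark we conclude \(\AK(R_{F,\eta}/F)=F[x]\) for every \(\eta\in F[X]\).

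Finally I would cast this uniform computation as translation invariance. Taking \(\eta=0\) gives \(\AK(R_{F,0}/F)=F[x]\), and the assignment \(X\mapsto x\) furnishes the natural isomorphism \(F[X]\cong\AK(R_{F,0}/F)\) that the statement records. Under this identification, adding a polynomial \(\eta\in F[X]\) to the defining relation \(X^2Y+Z^2+T^3\) of \(R_{F,0}\) is precisely translation of the Koras--Russell threefold by the element \(\eta(x)\) of its own ring of absolute constants, and the resulting threefold is exactly \(R_{F,\eta}\). Since the previous paragraph shows \(\AK(R_{F,\eta}/F)=F[x]=\AK(R_{F,0}/F)\) independently of the chosen \(\eta\), the ring of absolute constants is unchanged by every such translate, which is the assertion.

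There is no serious obstacle here: all of the analytic work is carried by the Theorem, and the Observation is essentially bookkeeping. The only points demanding a little care are the separate treatment of the trivial exponential map, to which the Theorem's hypothesis \(R^{\phi}\neq R\) does not apply, and the harmless identification of the two names \(F\) and \(k\) for the ground field. The conceptual content worth emphasizing is that the Theorem delivers the \emph{same} lower bound \(F[x]\) for all nontrivial \(\phi\) and for every \(\eta\) simultaneously; it is exactly this double uniformity that promotes the pointwise fact that \(x\) is an absolute constant to the structural fact that \(\AK\) is invariant under translation by its own elements.
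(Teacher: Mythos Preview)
Your proposal is correct and is exactly the intended reading: the paper offers no separate proof for the Observation, treating it as an immediate consequence of the Remark (upper bound \(\AK\subseteq F[x]\)) and the Theorem (lower bound \(F[x]\subseteq R^{\phi}\) for every nontrivial \(\phi\)), which is precisely the two-sided squeeze you carry out. Your handling of the trivial-\(\phi\) case and the identification of \(F\) with \(k\) are the only bookkeeping points, and you address both correctly.
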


\section{Questions for future research.}
It remains for future research the question of how general is the invariance of the absolute constants ring with respect to translates.

\bibliographystyle{plain}
\bibliography{references/references-article}

\begin{thebibliography}{1}

\bibitem{Hyper}
A.~Crachiola.
\newblock The hypersurface \(x+x^2y+z^2+t^3=0\) over a field of arbitrary characteristic.
\newblock {\em Proceedings of The American Mathematical Society}, 134(5), 2005.

\bibitem{thesis}
G.~Valdeon.
\newblock {\em Absolute Constants of the translates of the Koras-Russell threefold}.
\newblock PhD thesis, University of Tennessee at Knoxville, 2024.

\end{thebibliography}

\end{document}